\def\R{\mathbb{R}}
\def\H2{H^2(\R^N)}
\def\L2{L^2(\R^N)}
\def\H{{\cal H}}
\def\H1{H^1(\R)}
\DeclareMathOperator{\CP}{CP}
\theoremstyle{thmstyleone}%
\newtheorem{theorem}{Theorem}
\theoremstyle{thmstyletwo}%
\newtheorem{example}{Example}%
\newtheorem{lemma}[theorem]{Lemma}
\theoremstyle{thmstylethree}%
\begin{document}

\title[An Efficient Numerical Scheme for a Time-Fractional Burgers Equation with Caputo-Prabhakar Derivative]{An Efficient Numerical Scheme for a Time-Fractional Burgers Equation with Caputo-Prabhakar Derivative}

\author[1]{\fnm{Deeksha} \sur{Singh}}\email{deekshas905@gmail.com}
\equalcont{These authors contributed equally to this work.}

\author*[2]{\fnm{Swati} \sur{Yadav}}\email{swati@iitbhilai.ac.in}
\equalcont{These authors contributed equally to this work.}

\author[1]{\fnm{Rajesh Kumar} \sur{Pandey}}\email{rkpandey.mat@iitbhu.ac.in}
\equalcont{These authors contributed equally to this work.}

\affil[1]{\orgdiv{Department of Mathematical Sciences}, \orgname{Indian Institute of Technology (BHU) Varanasi}, \orgaddress{\postcode{221005}, \state{Uttar Pradesh}, \country{India}}}

\affil*[2]{\orgdiv{Department of Mathematics}, \orgname{Indian Institute of Technology Bhilai}, \orgaddress{\city{Durg}, \postcode{491002}, \state{Chhattisgarh}, \country{India}}}


\abstract{This paper presents a numerical method to solve a time-fractional Burgers equation, achieving order of convergence $(2-\alpha)$ in time, here $\alpha$ represents the order of the time derivative. The fractional derivative is modeled by the Caputo–Prabhakar (CP) formulation, which incorporates a kernel defined by the three-parameter Mittag-Leffler function. Finite difference methods are employed for the discretization of the derivatives. To handle the non-linear term, the Newton iteration method is used. The proposed numerical scheme has been shown to be stable and convergent in the norm $L_{\infty}$. The validity of the theory is supported by two numerical examples. }

\keywords{Caputo--Prabhakar derivative, Fractional Burger equation, Finite difference scheme, Numerical approximation, Stability and convergence analysis}


\pacs[MSC Classification]{35R11, 65M06, 65M12}

\maketitle

\section{Introduction}\label{sec1}

Fractional calculus has proven to be a powerful tool for modeling complex physical phenomena, including viscoelasticity, mechanics, and heat transfer in materials with memory. Caputo and Cametti \cite{caputo2008diffusion} introduced a modified model for transport processes in membranes using fractional derivatives to account for solid particles. Laskin \cite{laskin2002fractional} extended the study to the fractional Schrödinger equation, providing insights into the energy spectra of hydrogen-like atoms. A fractional complex-order model has been applied by Pinto and Carvalho \cite{pinto2016fractional} to study drug resistance in HIV infection. Additionally, Caputo and Fabrizio \cite{caputo2015damage} employed fractional derivatives to describe damage and fatigue in viscoelastic solids. For a comprehensive exploration of fractional calculus and its applications, readers are encouraged to refer to the seminal works by Kilbas et al. \cite{kilbas2006theory} and Podlubny \cite{podlubny1998fractional}.
	
Unlike the classical definition of integer-order derivatives, fractional derivatives are defined in multiple ways, each suited to different contexts. Various perspectives have been presented by Caputo, Hadamard, Gr{\"u}nwald, Letnikov, Riemann, and Riesz offering solutions to a wide range of real-world problems \cite{ahmad2017hadamard,garrappa2016grunwald,kilbas2006theory,muslih2010riesz,podlubny1998fractional}. 
In this paper, we consider the Caputo--Prabhakar (CP) derivative which is a generalization of the Caputo derivative obtained by introducing three parameter  Mittag--Leffler function, { termed as \emph{Prabhakar function}, in the kernel \cite{prabhakar1969two}. For an absolutely continuous function  $f \in \text{AC}^1(0,L) $ of order $\alpha \in (0,1)$, we can define CP derivative as} 
    \begin{align}\label{eq:1}    ^{\CP}_{~~0}D_{\rho,\alpha,\omega}^{\gamma}f(t)=\int_{0}^{t}(t-s)^{-\alpha}E_{\rho,1-\alpha}^{-\gamma}(\omega(t-s)^{\rho})f'(s)ds,
    \end{align}
 where 
	\begin{align*}
		E_{\alpha,\beta}^{\gamma}(\zeta)=\sum\limits_{m=0}^{\infty}\frac{\Gamma(\gamma+m) \zeta^{m}}{\Gamma (\gamma) \Gamma(\alpha m+\beta) m!},
	\end{align*}
 is a three parameter Mittag-Leffler function, here  $\alpha,\beta,\gamma$ are complex numbers. A Prabhakar function $E_{\alpha,\beta}^{\gamma}$ is an entire function of order $(\text{Re}(\alpha))^{-1}$ provided $\text{Re}(\alpha) > 0$ and it reduces to many very well-known special functions for particular values of $\alpha,\beta,\gamma$. 
Subsequently, the integral \eqref{eq:1} exists for $Re(\alpha)>0$. For convenience, we denote the CP derivative as 
\begin{align*}
    ^{\CP}_{~~0}D_{\rho,\alpha,\omega}^{\gamma} (\cdot) := 
\frac{^{\CP}\partial^{\alpha}(\cdot)}{\partial t^{\alpha}}.
\end{align*}

There are also real-world applications of Prabhakar fractional derivatives; CP derivative is used in describing the fractional Poisson process and dielectric relaxation phenomena \cite{garrappa2016grunwald, garrappa2016models}, for instance. Moreover, it describes a fractional Maxwell model in the linear viscoelasticity \cite{giusti2018prabhakar} and a mathematical modeling of fractional differential filtration dynamics \cite{bulavatsky2017mathematical}. 
Some other applictions of Prabhakar fractional derivatives can be seen in fractional dynamical systems \cite{derakhshan2016asymptotic}; generalized reaction--diffusion equations \cite{agarwal2017analytic}, generalized model of particle deposition in porous media \cite{xu2017time}.
Properties and applications of the Prabhakar function have been studied by Garra et al.\ in \cite{GARRA2018314} with a special attention to the asymptotic behavior of the function in the complex plane and on the negative semi-axis. Mainardi et al.\ \cite{mainardi2015complete} have examined the local integrability and complete monotonicity of the Prabhakar function. Srivastava et al.\ \cite{srivastava2019some} established some new connections between the ML functions of one, two, and three parameters and expressed the three-parameter ML function as a fractional derivative of the two-parameter ML function. For a more detailed study of the Prabhakar function and derivatives and integrals involving the Prabhakar function, we refer readers to \cite{giusti2020general, giusti2020practical} and the references therein.

Numerical approximation of CP derivatives have been used to find approximate solutions of fractional-order integro-differential equations using different approaches such as Legendre wavelet method \cite{abbaszadeh2021solving}, Haar wavelet collocation method \cite{marasi2022haar}, and Chebyshev polynomials \cite{bagharzadehtvasani2022numerical}. Derakhshan et al.\ studied numerical approximation of fractional Sturm--Liouville problems using CP derivatives \cite{derakhshan2019numerical}.
Numerous studies have been done on numerical solutions of fractional-order differential equations with CP derivatives. For example, Eshaghi et al.\ studied a fractional Black--Scholes model \cite{eshaghi2017fractional} and stability and dynamics of neutral and integro--differential systems of fractional order \cite{eshaghi2020stability}. Derakhshan et al.\ gave a comparison between homotopy perturbation transform and fractional Adams--Bashforth method for the solution of nonlinear fractional differential equations \cite{derakhshan2020comparison}, Garrappa et al.\ studied stability of fractional-order systems with Caputo--Prabhakar derivative \cite{garrappa2020stability}. In \cite{singh2022approximation} Singh et al. discussed numerical approximation of Caputo--Prabhakar derivative using polynomials of degree two and three and their application in solving Advection-Diffusion equation. Extending their work in \cite{singh2024high}, the authors disccused a higher order approximation of Caputo-Prabhakar derivative using polynomials of degree $r$.  

In this paper, we present a second-order convergent numerical scheme to solve the fractional Burgers equation, which describes traveling waves with front sharpening. The numerical approximation is based on a time-stepping scheme for the CP derivative, combined with a central finite difference method for spatial discretization. To achieve second-order convergence and reduce computational costs, Newton's linearization technique is applied to handle the nonlinear term. A rigorous theoretical analysis, including stability and convergence, is provided, demonstrating that the scheme achieves a temporal convergence rate of 
$(2-\alpha)$ and a spatial convergence rate of 2.
\subsection{Mathematical model}

We investigate the following  fractional Burgers equation involving CP derivative in the time direction:
	\begin{align}\label{Main.eq}
		\frac{^{\CP}\partial^{\alpha}u(x,t)}{\partial t^{\alpha}}+u(x,t)\frac{\partial u(x,t)}{\partial x}= \frac{\partial^2u(x,t)}{\partial x^2}+f(x,t), \quad (x,t) \in (0,L) \times (0,T),
	\end{align}
	with initial and boundary conditions
	\begin{eqnarray}\label{eq2}{
			\left.
			\begin{array}{lc@{}c@{}r}
				u(x,0)=\phi(x), \quad x \in (0,L),\\
				u(0,t)=u(L,t)=0,\quad t \in (0,T).
			\end{array}\right\}
	}\end{eqnarray} 
For our case, $\alpha \in (0,1)$, $L$ and $T$ are positive real numbers. $f(x,t)$ is a given source term. The functions $f(x,t)$ and $\phi(x)$ are sufficiently smooth known functions.

In \cite{yadav2020numerical}, the authors addressed a similar problem using the Atangana–Baleanu Caputo derivative by applying a direct linearization technique, which led to a reduced convergence order in the temporal direction. In contrast, we employ the Newton iteration method to achieve a higher convergence rate and improved accuracy.
It is important to note that the scheme presented here is a novel contribution, as its application to solve problem \eqref{Main.eq} has not been previously explored in the literature.

The remainder of this paper is organized as follows: In Section \ref{Sec:2}, we derive a numerical scheme for solving the time-fractional Burgers equation, followed by a rigorous analysis of stability and convergence. Section \ref{Sec:3} provides numerical examples that validate the sharpness of the achieved convergence rates. Finally, some concluding remarks are drawn in Section \ref{sec:4}.

\section{Derivation and Analysis of the Numerical Scheme}\label{Sec:2}

We assume the grid points as $ t_k=k\tau, ~k=0,1,\dots,N$ in time direction and for space direction, $x_i=ih, ~i=0,1,\dots,M$, where $ \tau=\frac{T}{N}$ and $ h=\frac{L}{M},$ for $M, N \in \mathbb{Z}^+ $. 
For the discretization of time-derivative of \eqref{Main.eq}, we use finite difference method (FDM) in the definition \eqref{eq:1}. So at a grid point $(x_i,t_k)$, we have
\begin{align}\label{Time-approx}
	\frac{^{\CP}\partial^{\alpha}u}{\partial t^{\alpha}}&\vert_{(x_i,t_k)} = \int^{t_k}_{0}(t_{k}-s)^{-\alpha}E_{\rho, 1-\alpha}^{-\gamma}(\omega(t_{k}-s)^{\rho})u'(x_i,s)ds \nonumber \\
	\approx & \sum_{j=0}^{k-1} \int^{t_{j+1}}_{t_j}  (t_{k}-s)^{-\alpha}E_{\rho, 1-\alpha}^{-\gamma}(\omega(t_{k}-s)^{\rho})u'(x_i,t_j)ds \nonumber\\
	= &  \frac{u(x_i,t_{j+1})-u(x_i,t_j)}{\tau} \sum_{j=0}^{k-1} \int^{t_{j+1}}_{t_j} (t_{k}-s)^{-\alpha} E_{\rho, 1-\alpha}^{-\gamma}(\omega(t_{k}-s)^{\rho})ds+ r_k \nonumber\\
	= & \frac{1}{\mu}[a_1u_i^k - \sum_{j=1}^{k-1}(a_{k-j}-a_{k-j+1})u_i^j-a_ku_i^0]+ r_k,
	\end{align}
where $r_k$ is the truncation error. Eq. \eqref{Time-approx} is obtained after integration and rearrangement of the terms, where we have introduced some notations as  $\mu=\tau^\alpha,\ u_i^k = u(x_i,t_k),$ and 
	\begin{align}\label{a-n}
		a_n= n^{1-\alpha}E_{\rho, 2-\alpha}^{-\gamma}(\omega(t_{n})^{\rho})-(n-1)^{1-\alpha}E_{\rho, 2-\alpha}^{-\gamma}(\omega(t_{n-1})^{\rho}),
	\end{align}
for $n=1,2,\dots,N$.

Following lemma gives a bound on the truncation error $r_k$ in the approximation \eqref{Time-approx} provided the solution $u$ has a continuous second order derivative. Proof of the lemma can be found in \cite{singh2022approximation}.
 \begin{lemma}\cite{singh2022approximation} 
 Let $ u(.,t)\in C^{2}[0,1] $ then for any $\alpha \in (0,1) $, the truncation error $r_k$ occurred in approximating the CP derivative using FDM, given by   
\begin{align*}
	{r_k} & = \frac{^{\CP}\partial^{\alpha}u(x,t)}{\partial t^{\alpha}}\vert_{(x_i,t_k)}-\frac{1}{\mu}[a_1u_i^k-\sum_{j=1}^{k-1}(a_{k-j}-a_{k-j+1})u_i^j-a_ku_i^0],
\end{align*} 
is bounded as 
\begin{align}
	\lvert r_k\rvert &\leq C_1\frac{\tau^{2-\alpha}}{2}\max_{0\leq t\leq t_{k-1}}u''(x_i,t_k),\ 
\end{align} 
where $ C_1$ is a constant.
\end{lemma}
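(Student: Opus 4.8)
The plan is to start from the exact error representation obtained by subtracting the discrete formula from the integral definition of the CP derivative. Since the coefficients $a_n$ in \eqref{a-n} arise precisely from integrating the kernel $(t_k-s)^{-\alpha}E_{\rho,1-\alpha}^{-\gamma}(\omega(t_k-s)^{\rho})$ exactly over each subinterval while freezing the forward difference quotient, $r_k$ collapses to a sum of local quadrature errors,
\begin{align*}
 r_k = \sum_{j=0}^{k-1}\int_{t_j}^{t_{j+1}} (t_k-s)^{-\alpha}E_{\rho,1-\alpha}^{-\gamma}(\omega(t_k-s)^{\rho})\Bigl[u'(x_i,s)-\tfrac{u_i^{j+1}-u_i^{j}}{\tau}\Bigr]\,\mathrm{d}s .
\end{align*}
Writing $K(t_k-s)$ for the kernel, the first step is to record the two facts that drive the estimate. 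First, the difference quotient equals $\tau^{-1}\int_{t_j}^{t_{j+1}}u'(x_i,\sigma)\,\mathrm{d}\sigma$, so the bracketed term has \emph{zero mean} over $[t_j,t_{j+1}]$ and, by Taylor's theorem with integral remainder, is bounded pointwise by $\tfrac{\tau}{2}\max_{[t_j,t_{j+1}]}|u''(x_i,\cdot)|$. Second, the Prabhakar function $E_{\rho,1-\alpha}^{-\gamma}$ is bounded together with its derivative on the compact argument range, a property I would import from the analytic results on the three-parameter Mittag--Leffler function (boundedness and complete monotonicity) cited in the introduction.

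I would then split the sum into the regular part $j=0,\dots,k-2$ and the singular last interval $j=k-1$. On each regular interval the kernel is smooth because $t_k-s\ge\tau$; the naive pointwise bound here only delivers order one, so the extra order must come from cancellation. Exploiting the zero-mean property I subtract the interval average $\bar K_j$ of $K$, which yields
\begin{align*}
 |R_j| \le \max_{[t_j,t_{j+1}]}|K'(t_k-\cdot)|\cdot \tau\cdot \tfrac{\tau}{2}\max|u''|\cdot \tau \le C\,\tau^{3}\,(t_k-t_{j+1})^{-\alpha-1}\max_{[t_j,t_{j+1}]}|u''(x_i,\cdot)| ,
\end{align*}
the factor $(t_k-t_{j+1})^{-\alpha-1}$ coming from differentiating the power-law part of $K$ (the derivative of $E_{\rho,1-\alpha}^{-\gamma}$ contributing only lower-order terms). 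Setting $m=k-j-1$ and summing gives $\sum_{m=1}^{k-1}(m\tau)^{-\alpha-1}=\tau^{-\alpha-1}\sum_{m\ge1}m^{-\alpha-1}$, and since $\alpha>0$ this tail series converges, so the whole regular part is $O(\tau^{2-\alpha})\max|u''|$. For the last interval the kernel is singular, so I would instead use the crude pointwise bound on the bracket together with $\int_{t_{k-1}}^{t_k}(t_k-s)^{-\alpha}\,\mathrm{d}s=\tau^{1-\alpha}/(1-\alpha)$, which again gives an $O(\tau^{2-\alpha})\max|u''|$ contribution. Adding the two pieces produces the stated bound, with the constant $C_1$ absorbing $\alpha$, $1-\alpha$, the uniform kernel bounds, and the convergent series $\sum_{m\ge1}m^{-\alpha-1}$.

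The main obstacle I anticipate is the interplay between the weak singularity of the kernel at $s=t_k$ and the generalized Mittag--Leffler weight. Because the naive estimate on the regular intervals is only first order, the gain to order $(2-\alpha)$ rests entirely on the zero-mean cancellation against the smoothness of $K$, which forces me to control $K'$ and hence the derivative of $E_{\rho,1-\alpha}^{-\gamma}$. Establishing these uniform bounds on the Prabhakar function and its derivative over $[0,t_k]$ --- rather than the pure power-law kernel of the classical L1 scheme --- is the delicate point, and is precisely where the analytic properties of the three-parameter Mittag--Leffler function are indispensable.
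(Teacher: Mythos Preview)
The paper does not actually prove this lemma; it states it and refers the reader to \cite{singh2022approximation} for the proof. There is therefore nothing to compare against in the paper itself.

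Your argument is correct and is essentially the standard route to the $O(\tau^{2-\alpha})$ truncation bound for L1-type discretizations, adapted to the Prabhakar kernel. The decomposition into the singular last subinterval (handled by the crude pointwise bound on the bracket together with $\int_{t_{k-1}}^{t_k}(t_k-s)^{-\alpha}\,ds=\tau^{1-\alpha}/(1-\alpha)$) and the regular subintervals (where the zero-mean of $u'(s)-\tau^{-1}(u_i^{j+1}-u_i^{j})$ allows you to replace $K$ by $K-\bar K_j$ and gain the extra factor of $\tau$ through $|K'|$) is exactly the mechanism used in the classical Caputo case. The resulting sum $\sum_{m\ge1}m^{-\alpha-1}$ converges for $\alpha>0$, so the regular part is indeed $O(\tau^{2-\alpha})$. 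The only place the Prabhakar weight enters is in the uniform bounds on $E_{\rho,1-\alpha}^{-\gamma}$ and its derivative on the compact argument range $[0,\omega T^{\rho}]$; since the three-parameter Mittag--Leffler function is entire, these bounds are immediate and your concern about this being ``delicate'' is somewhat overstated --- it is a routine absorption into the constant $C_1$.
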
 
We approximate the space derivatives of integer order from \eqref{Main.eq} as following,
\begin{align}
	\frac{\partial^{2}u(x,t)}{\partial x^{2}}\vert_{(x_i,t_k)} & \approx \frac{u^k_{i+1}-2u^k_{i}+u^k_{i-1}}{h^2}
 := \delta^2_x u_i^k,\label{space-app1}\\
	\frac{\partial u(x,t)}{\partial x}\vert_{(x_i,t_k)}
    & \approx \frac{u^k_{i+1}-u^k_{i-1}}{2h}
    := \frac{1}{2h}\Delta u_i^k,\label{space-app2}\\
	u \frac{\partial u(x,t)}{\partial x}\vert_{(x_i,t_k)}&
	\approx \frac{u_{i+1}^k+u_{i}^k+u_{i-1}^k}{3}\frac{u^k_{i+1}-u^k_{i-1}}{2h}
 \nonumber\\
	& =\frac{1}{6h}(u_i^k \Delta u_i^k+\Delta( u_i^k)^2)\label{space-app3}.
\end{align}
The discretizations \eqref{space-app1} and \eqref{space-app2} are simply central differences, while  (\ref{space-app3}) is based on the Galerkin method using piecewise linear test functions \cite{lopez1990difference,qiu2019implicit}.
To get the fully discrete numerical scheme for the fractional Burgers equation (\ref{Main.eq}), we substitute the approximations at point $ (x_i,t_k) $ from \eqref{Time-approx}, \eqref{space-app1}, and \eqref{space-app3},
	\begin{align}\label{main-scheme1}
		\frac{1}{\mu}[a_1u_i^k-\sum_{j=1}^{k-1}(a_{k-j}-a_{k-j+1})u_i^j-a_ku_i^0]+ \frac{1}{6h}(u_i^k \Delta u_i^k+\Delta( u_i^k)^2)= \delta^2_x u_i^k+f_i^k, 
	\end{align}
for $1\leq i \leq M,\ 1\leq k \leq N$, where $f_i^k=f(x_i,t_k)$. The initial and boundary conditions are as
\begin{align}\label{initian-cond}
	u^k_0=u^k_M=0,~~~~u^0_i=\phi(x_i).
\end{align}

\subsection{Stability and Convergence}\label{subsection:2.2}

First, we introduce some notations directly inspired by \cite{lopez1990difference,qiu2019implicit}, and we recommend these papers for a detailed explanation.
Define $ W=\{w=(w_0,w_1,\dots w_M)\vert w_0=w_M=0\}, $ and for any two elements $v,w \in W$ define inner product as 
$\langle v,w \rangle_h=\sum_{i=1}^{M-1}hv_{i}w_{i}$.
The corresponding norms are
$\|w\|_h=\sqrt{\langle w,w \rangle_h},~~~~ \|w\|_{\infty}= \max_{1\leq i\leq {M-1}} \vert w_i \vert.$
 %
Given the boundary conditions by \eqref{initian-cond}, we have
$ \|w\|_{\infty}\leq \frac{\sqrt{L}}{2}\|\delta_xw\|_h, $ where $ \delta_xw_i=\frac{1}{h}(w_{i}-w_{i-i}) .$


%
For the space $W$ following results hold true
	\begin{align}
		\langle \delta_x^2 w_1,w_2 \rangle_h= -\langle \delta_x w_1,\delta_x w_2 \rangle_h ~~~~ \forall w_1,w_2 \in W,\\
		\langle w\Delta w+\Delta (w)^2,w\rangle_h =0, ~~~~ \forall w \in W,\label{Eq2.23}
	\end{align}
 where $\delta_x^2$ and $\Delta$ are given by \eqref{space-app1} and \eqref{space-app2}, respectively.
 \begin{lemma}
	For any vector solution $ U = (U_0,U_1,U_2,\dots,U_N)$ to \eqref{main-scheme1}-\eqref{initian-cond}, the following inequality is satisfied
	\begin{align}\label{Ineq_Lem_2}
		\sum_{k=1}^{N} & \left[ -a_kU_0 -\sum_{j=1}^{k-1}(a_{k-j}-a_{k-j+1})U_j+a_1U_k \right] U_k \nonumber \\
		&\geq \frac{1}{2}\left\{N^{1-\alpha}E_{\rho, 2-\alpha}^{-\gamma}(\omega(t_{N})^{\rho})-(N-1)^{1-\alpha}E_{\rho, 2-\alpha}^{-\gamma}(\omega(t_{N-1})^{\rho})\right\} \sum_{k=1}^{N}U_k^2\nonumber\\
		&-\frac{N^{1-\alpha}}{2}E_{\rho, 2-\alpha}^{-\gamma}(\omega(t_{N})^{\rho})U_0^2,
	\end{align}
	where $ \{a_n\vert n=1,\dots,k\} $ are given by Eq. (\ref{a-n}), and are decreasing function of $ n $.
\end{lemma}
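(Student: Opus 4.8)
The plan is to read the left-hand side of \eqref{Ineq_Lem_2} as a quadratic form in the vector $(U_0,U_1,\dots,U_N)$ and to split it into an ``interior'' part involving only $U_1,\dots,U_N$ and a ``coupling'' part carrying the factor $U_0$. Writing $S=\sum_{k=1}^N L_kU_k$ with $L_k=a_1U_k-\sum_{j=1}^{k-1}(a_{k-j}-a_{k-j+1})U_j-a_kU_0$, I would set $S=S_1+S_2$, where $S_2=-U_0\sum_{k=1}^N a_kU_k$ collects the terms proportional to $U_0$. The first step is to recast the bracket in a telescoping-friendly form: since the $a_n$ are decreasing, $\sum_{j=1}^{k-1}(a_{k-j}-a_{k-j+1})=a_1-a_k$, so that $L_k^{(1)}:=a_1U_k-\sum_{j=1}^{k-1}(a_{k-j}-a_{k-j+1})U_j=a_kU_k+\sum_{j=1}^{k-1}(a_{k-j}-a_{k-j+1})(U_k-U_j)$. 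This isolates a clean diagonal term $a_kU_k^2$ and a weighted sum of increments.

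Next I would symmetrise the resulting double sum. Using the elementary identity $U_k(U_k-U_j)=\tfrac12(U_k-U_j)^2+\tfrac12(U_k^2-U_j^2)$ and the positivity of the weights $a_{k-j}-a_{k-j+1}$, the squared-increment contribution $D_1=\tfrac12\sum_{k}\sum_{j<k}(a_{k-j}-a_{k-j+1})(U_k-U_j)^2$ is nonnegative and may simply be discarded. The heart of the argument is then a bookkeeping computation for the remaining part $D_2=\tfrac12\sum_k\sum_{j<k}(a_{k-j}-a_{k-j+1})(U_k^2-U_j^2)$: collecting the net coefficient of each $U_l^2$ (a positive contribution from the index choice $k=l$ and a negative one from $j=l$), two telescoping sums give coefficient $a_{N-l+1}-a_l$. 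Combining with the diagonal term yields the compact identity $S_1=D_1+\tfrac12\sum_{l=1}^N(a_l+a_{N-l+1})U_l^2$, so that $S_1\ge\tfrac12\sum_{l=1}^N(a_l+a_{N-l+1})U_l^2$.

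Finally I would control the coupling term by Young's inequality, $-a_kU_0U_k\ge-\tfrac12 a_kU_0^2-\tfrac12 a_kU_k^2$ (using $a_k>0$), which gives $S_2\ge-\tfrac12\big(\sum_{k=1}^N a_k\big)U_0^2-\tfrac12\sum_{k=1}^N a_kU_k^2$. Here the telescoping identity $\sum_{n=1}^N a_n=N^{1-\alpha}E_{\rho,2-\alpha}^{-\gamma}(\omega t_N^{\rho})$ reproduces exactly the $U_0^2$-coefficient on the right of \eqref{Ineq_Lem_2}. Adding $S_1$ and $S_2$ cancels the $-\tfrac12\sum_l a_lU_l^2$ against part of $\tfrac12\sum_l(a_l+a_{N-l+1})U_l^2$, leaving $S\ge\tfrac12\sum_{l=1}^N a_{N-l+1}U_l^2-\tfrac12\big(\sum_n a_n\big)U_0^2$; the monotonicity bound $a_{N-l+1}\ge a_N$ then lowers the first sum to $\tfrac12 a_N\sum_l U_l^2$, and recognising $a_N$ as the first brace of \eqref{Ineq_Lem_2} completes the proof. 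I expect the delicate step to be the coefficient-counting in $D_2$ — keeping the summation ranges and the two telescopings straight so that the slack left after absorbing the cross terms is exactly $\tfrac12 a_N$ and no more is lost; a secondary point is that the argument tacitly requires $a_n>0$ (not merely decreasing), which should be recorded alongside the monotonicity already asserted.
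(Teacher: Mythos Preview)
Your proposal is correct and follows essentially the same route as the paper. The paper applies the AM--GM bound $U_jU_k\le\tfrac12(U_j^2+U_k^2)$ directly to all cross terms (both the $U_0U_k$ terms and the $U_jU_k$ terms weighted by $a_{k-j}-a_{k-j+1}>0$), then performs the same two telescopings you describe to arrive at $-\tfrac12\big(\sum_k a_k\big)U_0^2+\tfrac12\sum_k a_{N-k+1}U_k^2$ and finishes with $a_{N-k+1}\ge a_N$; your increment identity $U_k(U_k-U_j)=\tfrac12(U_k-U_j)^2+\tfrac12(U_k^2-U_j^2)$ is just a repackaging of the same AM--GM step, and your observation that $a_n>0$ is tacitly needed is accurate and worth recording.
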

\begin{proof} { By taking the left hand side of the inequality \eqref{Ineq_Lem_2}, we can simplify it as follows using simple well-known relations and inequalities}
\begin{align*}
	&\sum_{k=1}^{N}  \left[-a_kU_0 -\sum_{j=1}^{k-1}(a_{k-j}-a_{k-j+1})U_j+a_1U_k \right] U_k\nonumber\\
	&= -\sum_{k=1}^{N} a_k(U_0 U_k)- \sum_{k=2}^{N} \sum_{j=1}^{k-1} (a_{k-j}-a_{k-j+1}) U_j U_k+ a_1 \sum_{k=1}^{N} U_k^2 \nonumber\\
	&\geq -\frac{1}{2} \sum_{k=1}^{N} a_k (U_0^2+U_k^2)-\frac{1}{2} \sum_{k=2}^{N} \sum_{j=1}^{k-1} (a_{k-j}-a_{k-j+1}) (U_j^2+U_k^2) + a_1 \sum_{k=1}^{N} U_k^2 \nonumber\\
	&= -\frac{1}{2} \sum_{k=1}^{N} a_k (U_0^2+U_k^2)-\frac{1}{2} \sum_{k=1}^{N} (a_{1}-a_{k}) U_k^2 -\frac{1}{2} \sum_{k=1}^{N-1} (a_{1}-a_{N-k+1}) U_k^2+ a_1 \sum_{k=1}^{N} U_k^2\nonumber\\
	&= -\frac{1}{2} \sum_{k=1}^{N} a_k U_0^2+ \sum_{k=1}^{N} (\frac{1}{2}a_{N-k+1}) U_k^2
        \geq -\frac{1}{2} \sum_{k=1}^{N} a_k U_0^2+ \frac{1}{2}a_{N}\sum_{k=1}^{N} U_k^2.
\end{align*}
 Also, since $\sum_{k=1}^{N} a_k= N^{1-\alpha} E_{\rho, 2-\alpha}^{-\gamma}(\omega (t_{N})^{\rho})$ and\\
 $a_{N} = N^{1-\alpha}E_{\rho, 2-\alpha}^{-\gamma}(\omega (t_{N})^{\rho})-(N-1)^{1-\alpha}E_{\rho, 2-\alpha}^{-\gamma}(\omega (t_{N-1})^{\rho})  $, we have the inequality \eqref{Ineq_Lem_2}.
 \end{proof}

Following theorem proves the boundedness of the numerical solution obtained from the proposed numerical scheme (\ref{main-scheme1}) which shows the scheme \eqref{main-scheme1} is stable.

\begin{theorem}\label{Th2.1}
    Let the solution of the proposed numerical scheme (\ref{main-scheme1}) with boundary and initial conditions (\ref{initian-cond}) is $ \{u_j^k \vert 1\leq j \leq M-1,1\leq k \leq N\} $, then we have
\begin{align}
	\sum_{k=1}^{N} \|u^k\|^2_{\infty} \leq &\frac{N^{1-\alpha}L}{8\mu} E_{\rho, 2-\alpha}^{-\gamma}(\omega t_{N}^{\rho}) \|u^0\|_h^2 \nonumber \\&+\frac{\mu L}{8\left\{N^{1-\alpha}E_{\rho, 2-\alpha}^{-\gamma}(\omega (t_{N})^{\rho})-(N-1)^{1-\alpha}E_{\rho, 2-\alpha}^{-\gamma}(\omega (t_{N-1})^{\rho}) \right\}}\|f^k\|_h^2.\nonumber
\end{align}	
\end{theorem}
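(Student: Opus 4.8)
The plan is to run a discrete energy argument: take the $\langle\cdot,\cdot\rangle_h$ inner product of the fully discrete scheme \eqref{main-scheme1} with the solution vector $u^k$, sum the resulting identity over $k=1,\dots,N$, and control each term. First I would pair \eqref{main-scheme1} with $u^k$. The nonlinear contribution $\frac{1}{6h}\langle u^k\Delta u^k+\Delta(u^k)^2,u^k\rangle_h$ vanishes identically by \eqref{Eq2.23}, which is precisely why the Galerkin-type form \eqref{space-app3} was adopted, and the diffusion term becomes $\langle\delta_x^2 u^k,u^k\rangle_h=-\|\delta_x u^k\|_h^2$ by the summation-by-parts identity, contributing a favorable term. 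What remains is the identity
\begin{align*}
\frac{1}{\mu}\left\langle a_1u^k-\sum_{j=1}^{k-1}(a_{k-j}-a_{k-j+1})u^j-a_ku^0,\,u^k\right\rangle_h+\|\delta_x u^k\|_h^2=\langle f^k,u^k\rangle_h,
\end{align*}
which I would then sum over $k=1,\dots,N$.

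Next I would bound the accumulated memory term from below using the preceding lemma. Although inequality \eqref{Ineq_Lem_2} is stated for scalars $U_k$, its proof uses only the elementary estimate $2U_jU_k\le U_j^2+U_k^2$; replacing products by inner products and squares by $\|\cdot\|_h^2$ (justified by Cauchy--Schwarz together with Young's inequality) makes the identical argument transfer verbatim to $W$. This yields
\begin{align*}
\sum_{k=1}^N\left\langle a_1u^k-\sum_{j=1}^{k-1}(a_{k-j}-a_{k-j+1})u^j-a_ku^0,\,u^k\right\rangle_h\ge \frac{a_N}{2}\sum_{k=1}^N\|u^k\|_h^2-\frac{N^{1-\alpha}}{2}E_{\rho,2-\alpha}^{-\gamma}(\omega t_N^\rho)\|u^0\|_h^2,
\end{align*}
where $a_N$ is the $n=N$ instance of \eqref{a-n}, i.e.\ the positive quantity appearing in the denominator of the claimed bound.

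Then I would absorb the source term. Cauchy--Schwarz and Young's inequality with the sharp parameter choice $\varepsilon=a_N/\mu$ give $\langle f^k,u^k\rangle_h\le \frac{a_N}{2\mu}\|u^k\|_h^2+\frac{\mu}{2a_N}\|f^k\|_h^2$. Inserting the lower bound for the memory sum and this source estimate into the summed identity, the two $\frac{a_N}{2\mu}\sum_k\|u^k\|_h^2$ contributions cancel exactly, leaving
\begin{align*}
\sum_{k=1}^N\|\delta_x u^k\|_h^2\le\frac{N^{1-\alpha}}{2\mu}E_{\rho,2-\alpha}^{-\gamma}(\omega t_N^\rho)\|u^0\|_h^2+\frac{\mu}{2a_N}\sum_{k=1}^N\|f^k\|_h^2.
\end{align*}
Finally I would convert this discrete $H^1$-type estimate into the stated $L_\infty$ bound via the embedding $\|u^k\|_\infty\le\frac{\sqrt{L}}{2}\|\delta_x u^k\|_h$, so that $\sum_k\|u^k\|_\infty^2\le\frac{L}{4}\sum_k\|\delta_x u^k\|_h^2$; multiplying the displayed inequality by $L/4$ produces the prefactors $\frac{N^{1-\alpha}L}{8\mu}$ and $\frac{\mu L}{8a_N}$ exactly as asserted.

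I expect the main obstacle to be twofold: the careful justification that \eqref{Ineq_Lem_2} passes from the scalar to the vector setting, and the exact constant bookkeeping, in particular choosing $\varepsilon=a_N/\mu$ so that the $\|u^k\|_h^2$ terms cancel precisely. Throughout one must also use the positivity and monotonicity of the coefficients $a_n$ and of the Prabhakar values $E_{\rho,2-\alpha}^{-\gamma}(\omega t_N^\rho)$ entering $a_N$ to guarantee that every denominator stays positive.
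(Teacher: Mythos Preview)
Your proposal is correct and follows essentially the same route as the paper: inner product with $u^k$, sum in $k$, kill the nonlinear term via \eqref{Eq2.23}, use summation by parts on the diffusion, invoke the lemma \eqref{Ineq_Lem_2} for the memory sum, choose the Young parameter so that the $\|u^k\|_h^2$ contributions cancel, and finish with the embedding $\|u^k\|_\infty\le\tfrac{\sqrt L}{2}\|\delta_x u^k\|_h$. Your remark that \eqref{Ineq_Lem_2} must be lifted from scalars to vectors is a point the paper applies tacitly, and your identification of the precise $\varepsilon=a_N/\mu$ makes explicit what the paper carries through the display lines.
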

\begin{proof} 
    By taking inner product of Eq. (\ref{main-scheme1}) with $ u^k $ then by summing up from 1 to $ N $, we get
\begin{align*}
	\frac{1}{\mu} \sum_{k=1}^{N} \langle a_1u^k-\sum_{j=1}^{k-1}(a_{k-j}
    - a_{k-j+1})u^j-a_ku^0, u^k\rangle_h 
    & + \frac{1}{6h}\sum_{k=1}^{N}\langle u^k \Delta u^k+\Delta( u^k)^2, u^k\rangle_h\\
    = \sum_{k=1}^{N}\langle f^k,u^k \rangle_h   & - \sum_{k=1}^{N} \| \delta_x u^k \|_h^2,
\end{align*}
\begin{align*}
	\mu \sum_{k=1}^{N} \| \delta_x u^k \|_h^2  & \leq
	\mu \sum_{k=1}^{N}\langle f^k,u^k \rangle_h  \frac{N^{1-\alpha}}{2} E_{\rho, 2-\alpha}^{-\gamma}(\omega (t_{N})^{\rho}) \|U_0\|_h^{2} \\
    - \frac{1}{2} & \left\{N^{1-\alpha}E_{\rho, 2-\alpha}^{-\gamma}(\omega (t_{N})^{\rho})-(N-1)^{1-\alpha}E_{\rho, 2-\alpha}^{-\gamma}(\omega (t_{N-1})^{\rho}) \right\} \sum_{k=1}^{N} U_k^2.
\end{align*}
In the last inequality above, we used Eq.(\ref{Ineq_Lem_2}) and Eq.(\ref{Eq2.23}). By the virtue of Young's inequality: for all $a, b \in \mathbb{R}$ there exists $\varepsilon>0$ such that $ ab\leq \varepsilon a^2+\frac{1}{4\varepsilon }b^2,$ we have
\begin{align*}
    \mu \sum_{k=1}^{N} \| \delta_x & u^k \|_h^2 \leq \\
    - & \frac{1}{2} \left\{  N^{1-\alpha}  E_{\rho, 2-\alpha}^{-\gamma}(\omega (t_{N})^{\rho})
    - (N-1)^{1-\alpha} E_{\rho, 2-\alpha}^{-\gamma}(\omega (t_{N-1})^{\rho})  \right\} \sum_{k=1}^{N} \|u^k\|_h^2 \\
    + & \frac{N^{1-\alpha}}{2} E_{\rho, 2-\alpha}^{-\gamma}(\omega(t_{N})^{\rho}) \|u^0\|_h^2  \\
    + & \ \mu \sum_{k=1}^{N} \left(\frac{\left\{N^{1-\alpha}E_{\rho, 2-\alpha}^{-\gamma}(\omega (t_{N})^{\rho})-(N-1)^{1-\alpha}E_{\rho, 2-\alpha}^{-\gamma}(\omega (t_{N-1})^{\rho}) \right\}}{2\mu}\|u^k\|_h^2 \right.  \\
    + & \left. \frac{\mu}{2\{N^{1-\alpha}E_{\rho, 2-\alpha}^{-\gamma}(\omega (t_{N})^{\rho})-(N-1)^{1-\alpha}E_{\rho, 2-\alpha}^{-\gamma}(\omega (t_{N-1})^{\rho}) \}}\|f^k\|_h^2\right).
		\end{align*}
Since, $ \|u\|_{\infty}\leq \frac{\sqrt{L}}{2}\|\delta_xu\|_h,$
\begin{align*}
	\frac{4\mu}{L} \sum_{k=1}^{N} & \|u^k\|_{\infty}^2 \leq \frac{N^{1-\alpha}}{2} E_{\rho, 2-\alpha}^{-\gamma} (\omega (t_{N})^{\rho}) \|u^0\|_h^2 \\
	+ & \frac{\mu^{2}}{2\{N^{1-\alpha}E_{\rho, 2-\alpha}^{-\gamma}(\omega (t_{N})^{\rho})-(N-1)^{1-\alpha}E_{\rho, 2-\alpha}^{-\gamma}(\omega (t_{N-1})^{\rho})\}}\sum_{k=1}^{N}\|f^k\|_h^2, \\
	\Longrightarrow ~~ \sum_{k=1}^{N} & \|u^k\|_{\infty}^2 \leq \frac{N^{1-\alpha}L}{8\mu} E_{\rho,2-\alpha}^{-\gamma}(\omega (t_{N})^{\rho})\|u^0\|_h^2 \\
	+ & \frac{\mu L}{8\{N^{1-\alpha}E_{\rho, 2-\alpha}^{-\gamma}(\omega (t_{N})^{\rho})-(N-1)^{1-\alpha}E_{\rho, 2-\alpha}^{-\gamma}(\omega (t_{N-1})^{\rho})\}}\sum_{k=1}^{N}\|f^k\|_h^2. 
\end{align*}
\end{proof}

Now, we will discuss the convergence of our scheme for which we use energy method. Assume $ v(x_i,t_k)=v^k_i $ be the exact solution of the problem (\ref{Main.eq})-(\ref{eq2}), and let $ \epsilon_i^k=v(x_i,t_k)-u^k_i $ be the error between exact solution and the approximated solution, then $ \epsilon_0^k= \epsilon_M^k=0$ and $ \epsilon_i^0= 0 ~~ \forall~~1\leq i \leq M-1,~~1\leq k \leq N $, and
\begin{align*}
	\left[\mu \frac{^{\CP}\partial^{\alpha}v(x_i,t_k)}{\partial t^{\alpha}} - \left(a_1u_i^k- \sum_{j=1}^{k-1}(a_{k-j} - a_{k-j+1})u_i^j-a_ku_i^0\right) \right] & \\
	+ \mu\left[ v(x_i,t_k)\frac{\partial v(x_i,t_k)}{\partial x} - \frac{1}{6h}(u_i^k \Delta u_i^k+\Delta( u_i^k)^2)\right] = & \mu\left[\frac{\partial^2v(x_i,t_k)}{\partial x^2} - \delta^2_x u_i^k\right],
\end{align*}

\begin{align}
	\Rightarrow ~~[a_1\epsilon_i^k & -\sum_{j=1}^{k-1}(a_{k-j}-a_{k-j+1})\epsilon_i^j-a_k\epsilon_i^0]- \frac{\mu}{6h}(\epsilon_i^k \Delta \epsilon_i^k+\Delta( \epsilon_i^k)^2)\nonumber\\
    & = \mu\delta^2_x \epsilon_i^k 	
     -\mu \left[ \frac{^{\CP} \partial^{\alpha} v(x_i,t_k)}{\partial t^{\alpha}} - \frac{1}{\mu}\left(a_1v_i^k- \sum_{j=1}^{k-1}(a_{k-j} - a_{k-j+1})v_i^j-a_kv_i^0\right) \right]\nonumber\\ 
	& + \mu\left[ \frac{\partial^{2} v(x_i,t_k)}{\partial x^{2}} - \delta^2_x v_i^k\right] 
    -\mu \left[v(x_i,t_k)\frac{\partial v(x_i,t_k)}{\partial x}-\frac{1}{6h}(v_i^k \Delta v_i^k+\Delta( v_i^k)^2)\right]\nonumber\\
 &-\frac{\mu }{6h}\left[ \left( v_i^k \Delta\epsilon_i^k+ \Delta(\epsilon_i^k v_i^k)\right) + \epsilon_i^k \Delta v_i^k+\Delta(\epsilon_i^k v_i^k)\right].\label{Eq2.18}
\end{align}

\noindent Let us denote
\begin{align*}
	{(\gamma_1)}_i^k &= \frac{\partial^{2} v(x_i,t_k)}{\partial x^{2}} - \delta^2_x v_i^k,\\
	{(\gamma_2)}_i^k& = \frac{^{\CP}\partial^{\alpha}v(x_i,t_k)}{\partial t^{\alpha}} - \frac{1}{\mu}\left(a_1v_i^k- \sum_{j=1}^{k-1}(a_{k-j} - a_{k-j+1})v_i^j-a_kv_i^0\right),\\
    {(\gamma_3)}_i^k& = v(x_i,t_k)\frac{\partial v(x_i,t_k)}{\partial x}-\frac{1}{6h}(v_i^k \Delta v_i^k+\Delta( v_i^k)^2),\\
    {(\gamma_4)}_i^k &= v_i^k \Delta\epsilon_i^k+ \Delta(\epsilon_i^k v_i^k),\\
	{(\gamma_5)}_i^k &= \epsilon_i^k \Delta v_i^k+\Delta(\epsilon_i^k v_i^k),
	\end{align*}
	and by rewriting Eq. (\ref{Eq2.18}), we have
\begin{align}
	[a_1\epsilon_i^k&-\sum_{j=1}^{k-1}(a_{k-j}-a_{k-j+1})\epsilon_i^j-a_k\epsilon_i^0]- \frac{\mu}{6h}(\epsilon_i^k \Delta \epsilon_i^k+\Delta( \epsilon_i^k)^2)= \mu\delta^2_x \epsilon_i^k\nonumber\\
	&+\mu \left[ {(\gamma_1)}_i^k-{(\gamma_2)}_i^k-{(\gamma_3)}_i^k -\frac{1}{6h}\left({(\gamma_4)}_i^k-{(\gamma_5)}_i^k\right) \right].\label{eq2.19}
	\end{align}
We can easily calculate the following results by using initial and boundary conditions ( see  \cite[Lemma 3.6, Lemma 3.7]{qiu2019implicit})
\begin{align}
	\langle {(\gamma_4)}_i^k, \epsilon^k \rangle_h =0, ~~~~ \vert\langle {(\gamma_5)}_i^k, \epsilon^k\rangle_h\vert \leq 3C_0h\|\epsilon^k\|_h^2,\\
	\sum_{k=1}^N \tau(\|{(\gamma_1)}_i^k\|_h^2+\|{(\gamma_2)}_i^k\|_h^2 +\|{(\gamma_3)}_i^k\|_h^2) \leq C(\tau^{2-{\alpha}}-h^2)^2, \label{Eq2.29}
\end{align}
 where $C_0$ and $C$ are real positive constants.

Now, in Eq. (\ref{eq2.19}), taking product with $ h\epsilon_i^k $ and then  summing up for $ i $ from 1 to $ M-1 $, 
\begin{align*}
	\langle a_1\epsilon^k & -\sum_{j=1}^{k-1}(a_{k-j}-a_{k-j+1})\epsilon^j -a_k\epsilon^0,\epsilon^k\rangle_h - \frac{\mu}{6h}\langle\epsilon^k \Delta \epsilon^k + \Delta( \epsilon^k)^2,\epsilon^k\rangle_h \nonumber\\
    & = \mu\langle\delta^2_x \epsilon^k,\epsilon^k \rangle_h
	+\mu \langle {(\gamma_1)}^k-{(\gamma_2)}^k-{(\gamma_3)}^k -\frac{1}{6h}\left({(\gamma_4)}^k-{(\gamma_5)}^k\right),\epsilon^k\rangle_h, 
\end{align*}
\begin{align*}
	& \Rightarrow ~ \sum_{k=1}^N\mu \| \delta_x \epsilon^k\|_h^2 = - \sum_{k=1}^N \langle a_1 \epsilon^k - \sum_{j=1}^{k-1}(a_{k-j}-a_{k-j+1}) \epsilon^j - a_k\epsilon^0, \epsilon^k \rangle_h ~~~~~~~~~ \\
    &~~~~~~~~~~~~~~~~  + \frac{\mu}{6h}\sum_{k=1}^N\langle\epsilon^k \Delta \epsilon^k + \Delta( \epsilon^k)^2,\epsilon^k\rangle_h \\
    &~~~~~~~~~~~~~~~~ + \mu\sum_{k=1}^N \langle {(\gamma_1)}^k-{(\gamma_2)}^k-{(\gamma_3)}^k -\frac{1}{6h}\left({(\gamma_4)}^k-{(\gamma_5)}^k\right),\epsilon^k\rangle_h \\
    & \leq - \frac{1}{2} \{N^{1-\alpha}E_{\rho, 2-\alpha}^{-\gamma}(\omega (t_{N})^{\rho})-(N-1)^{1-\alpha}E_{\rho, 2-\alpha}^{-\gamma}(\omega (t_{N-1})^{\rho})\} \sum_{k=1}^{N} \|\epsilon^k\|_h^2\\
    &~~~~~~~~~~~~~~~~ + \frac{N^{1-\alpha}}{2} E_{\rho,2-\alpha}^{-\gamma} (\omega t_{N}^{\rho}) \| \epsilon^0\|_h^2 \\
    &~~~~~~~~~~~~~~~~ + \mu \langle {(\gamma_1)}^k-{(\gamma_2)}^k-{(\gamma_3)}^k -\frac{1}{6h}\left({(\gamma_4)}^k-{(\gamma_5)}^k\right),\epsilon^k\rangle_h\\
	& \leq - \frac{1}{2} \{N^{1-\alpha}E_{\rho, 2-\alpha}^{-\gamma}(\omega (t_{N})^{\rho})-(N-1)^{1-\alpha}E_{\rho, 2-\alpha}^{-\gamma}(\omega (t_{N-1})^{\rho})\} \sum_{k=1}^{N} \|\epsilon^k\|_h^2 \\
    &~~~~~~~~~~~~~~~~ +\mu\sum_{k=1}^N \langle {(\gamma_1)}^k-{(\gamma_2)}^k-{(\gamma_3)}^k ,\epsilon^k\rangle_h + \frac{C_0}{2}\mu \sum_{k=1}^N \|\epsilon^k\|_h^2\\
    & \leq \left(\frac{C_0}{2}- \frac{1}{2\mu } \{N^{1-\alpha}E_{\rho, 2-\alpha}^{-\gamma}(\omega (t_{N})^{\rho})-(N-1)^{1-\alpha}E_{\rho, 2-\alpha}^{-\gamma}(\omega (t_{N-1})^{\rho})\}\right) \sum_{k=1}^{N} \|\epsilon^k\|_h^2 \\
    &~~~~~~~~~~~~~~~~ +\sum_{k=1}^{N}\left( \| {(\gamma_1)}^k\|_h + \|{(\gamma_2)}^k\|_h + \|{(\gamma_3)}^k\|_h \right) \|\epsilon^k\|_h.
\end{align*}
Again, using Young inequality assuming \\
$\varepsilon=\frac{1}{2\mu }\{N^{1-\alpha}E_{\rho, 2-\alpha}^{-\gamma}(\omega (t_{N})^{\rho})-(N-1)^{1-\alpha}E_{\rho, 2-\alpha}^{-\gamma}(\omega (t_{N-1})^{\rho})\}-\frac{C_0}{2}>0$,

\begin{align}
	& \sum_{k=1}^N  \|\delta_x \epsilon^k\|_h^2\nonumber\\
    & \leq \left(\frac{C_0}{2}- \frac{1}{2\mu }  \{N^{1-\alpha}E_{\rho, 2-\alpha}^{-\gamma}(\omega (t_{N})^{\rho})-(N-1)^{1-\alpha}E_{\rho, 2-\alpha}^{-\gamma}(\omega (t_{N-1})^{\rho})\}\right) \sum_{k=1}^{N} \|\epsilon^k\|_h^2\nonumber \\
	& + \left[\frac{1}{2\mu}\{N^{1-\alpha}E_{\rho, 2-\alpha}^{-\gamma}(\omega (t_{N})^{\rho})-(N-1)^{1-\alpha}E_{\rho, 2-\alpha}^{-\gamma}(\omega (t_{N-1})^{\rho})\}-\frac{C_{0}}{2}\right] \sum_{k=1}^{N} \|\epsilon^k\|_h^2 \nonumber \\
	& + \frac{\sum_{k=1}^{N}\left( \| {(\gamma_1)}^k\|_h^2 +\|{(\gamma_2)}^k\|_h^2 + \|{(\gamma_3)}^k\|_h^2\right) }{2[\frac{1}{2} \{N^{1-\alpha}E_{\rho, 2-\alpha}^{-\gamma}(\omega (t_{N})^{\rho})-(N-1)^{1-\alpha}E_{\rho, 2-\alpha}^{-\gamma}(\omega (t_{N-1})^{\rho})\}-C_{0}]},\nonumber\\
	& \leq C \sum_{k=1}^{N}\left( \| {(\gamma_1)}^k\|_h^2+\|{(\gamma_2)}^k\|_h^2 +\|{(\gamma_3)}^k\|_h^2\right)~,\nonumber
\end{align}
 
\begin{equation*}
	\Rightarrow ~~~~~\tau \sum_{k=1}^N \|\delta_x \epsilon^k\|_h^2 \leq C (\tau^{2-\alpha}-h^2)^2 \quad (\text{using Eq.(\ref{Eq2.29})}).
\end{equation*}

\noindent We can summarise the above result as a following theorem:

 \begin{theorem}
     The finite difference scheme \eqref{main-scheme1} is convergent in the following sense
     	\begin{equation}
		\tau \sum_{k=1}^N \|\delta_x \epsilon^k\|_h^2 \leq C (\tau^{2-\alpha}-h^2)^2,
	\end{equation}
 and the convergence rate $(2-\alpha)$ in time direction and $2$ in space direction.
 \end{theorem}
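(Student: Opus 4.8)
The plan is to run the discrete energy method on the error $\epsilon_i^k = v(x_i,t_k) - u_i^k$, using exactly the ingredients assembled just before the statement. First I would form the error equation \eqref{eq2.19} by subtracting the fully discrete scheme \eqref{main-scheme1} from the exact equation \eqref{Main.eq} sampled at the node $(x_i,t_k)$. The mismatch organizes into the spatial consistency defect $(\gamma_1)_i^k$, the temporal Caputo--Prabhakar defect $(\gamma_2)_i^k$, the defect $(\gamma_3)_i^k$ of the Galerkin-type convection discretisation \eqref{space-app3}, together with the genuinely nonlinear cross terms $(\gamma_4)_i^k$ and $(\gamma_5)_i^k$ that appear because the discrete convection is quadratic in the unknown. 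Because $\epsilon_i^0 = 0$ and $\epsilon_0^k = \epsilon_M^k = 0$, all initial- and boundary-error contributions vanish, which is what makes the estimate clean.

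Next I would test \eqref{eq2.19} against $h\epsilon_i^k$, sum over $i=1,\dots,M-1$, and then over $k=1,\dots,N$. Three structural facts do the heavy lifting. The summation-by-parts identity converts $\langle \delta_x^2 \epsilon^k,\epsilon^k\rangle_h$ into $-\|\delta_x\epsilon^k\|_h^2$, which is precisely the quantity to be controlled. The nonlinear identity \eqref{Eq2.23} annihilates the discrete convection applied to the error itself, while the auxiliary relations $\langle(\gamma_4)^k,\epsilon^k\rangle_h = 0$ and $|\langle(\gamma_5)^k,\epsilon^k\rangle_h| \le 3C_0 h\|\epsilon^k\|_h^2$ keep the residual nonlinear interaction between $\epsilon^k$ and the exact solution under control, contributing at most $\tfrac{C_0}{2}\sum_k\|\epsilon^k\|_h^2$. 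Finally, the lemma yielding inequality \eqref{Ineq_Lem_2} supplies the crucial lower bound on the fractional memory sum by $\tfrac12 a_N\sum_k\|\epsilon^k\|_h^2$, where $a_N$ is the coefficient in \eqref{a-n}; since $\epsilon^0=0$ the boundary penalty in \eqref{Ineq_Lem_2} drops out entirely.

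Collecting these steps leaves an inequality of the form
\begin{equation*}
\sum_{k=1}^N\|\delta_x\epsilon^k\|_h^2 \le \Big(\tfrac{C_0}{2}-\tfrac{a_N}{2\mu}\Big)\sum_{k=1}^N\|\epsilon^k\|_h^2 + \sum_{k=1}^N\big(\|(\gamma_1)^k\|_h+\|(\gamma_2)^k\|_h+\|(\gamma_3)^k\|_h\big)\|\epsilon^k\|_h.
\end{equation*}
I would then apply Young's inequality $ab \le \varepsilon a^2 + \tfrac{1}{4\varepsilon}b^2$ with the tailored choice $\varepsilon = \tfrac{a_N}{2\mu}-\tfrac{C_0}{2}$, so that the two $\sum_k\|\epsilon^k\|_h^2$ contributions cancel exactly and only the consistency defects survive on the right. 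Bounding $(\|(\gamma_1)^k\|_h+\|(\gamma_2)^k\|_h+\|(\gamma_3)^k\|_h)^2 \le 3(\|(\gamma_1)^k\|_h^2+\|(\gamma_2)^k\|_h^2+\|(\gamma_3)^k\|_h^2)$, multiplying through by $\tau$, and invoking the consistency estimate \eqref{Eq2.29} delivers $\tau\sum_{k=1}^N\|\delta_x\epsilon^k\|_h^2 \le C(\tau^{2-\alpha}-h^2)^2$, from which the rates $2-\alpha$ in time and $2$ in space follow.

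The step I expect to be the main obstacle is guaranteeing positivity of $\varepsilon$, i.e. $\tfrac{a_N}{2\mu} > \tfrac{C_0}{2}$, since this is exactly what licenses the cancellation. Here $C_0$ depends on $\|v\|_\infty$ through the nonlinear cross term $(\gamma_5)$, whereas $a_N/\mu$ is governed by the decay of the Prabhakar coefficients from \eqref{a-n}; in the Caputo limit one finds $a_N/\mu \to (1-\alpha)/T^{\alpha}$, so the condition is effectively a smallness requirement on the convective constant (or on $T$), and I would need to justify it carefully rather than merely assume it. The other delicate ingredient is the consistency estimate \eqref{Eq2.29} itself: controlling the temporal defect $(\gamma_2)^k$ at rate $\tau^{2-\alpha}$ uniformly in $k$ rests on the truncation lemma and the assumed $C^2$ regularity of $v$ in time, while the $O(h^2)$ bounds for the central-difference defect $(\gamma_1)^k$ and the Galerkin convection defect $(\gamma_3)^k$ require sufficient spatial smoothness of $v$.
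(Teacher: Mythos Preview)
Your proposal is correct and follows essentially the same route as the paper: form the error equation \eqref{eq2.19}, test against $h\epsilon^k$, sum in $i$ and $k$, use \eqref{Eq2.23} and the $(\gamma_4),(\gamma_5)$ identities together with the fractional lower bound \eqref{Ineq_Lem_2}, then apply Young's inequality with the tailored $\varepsilon=\tfrac{a_N}{2\mu}-\tfrac{C_0}{2}$ to cancel the $\sum_k\|\epsilon^k\|_h^2$ terms, and finish with \eqref{Eq2.29}. Your discussion of the positivity requirement on $\varepsilon$ is more explicit than the paper's, which simply assumes it.
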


\section{Numerical Examples}\label{Sec:3}

Before conducting numerical examples we introduce Newton's iterative method (see \cite{qiu2019implicit,singh2023fourth}) to deal with the non-linear term in the numerical scheme \eqref{main-scheme1}.

\subsection{Newton's iterative method}\label{subsection:2.1}

Let us assume $ \psi(u)=uu_x $, and hence the right hand side of \eqref{space-app3} can be denoted as $\psi(u)^k_i$. By rewriting \eqref{main-scheme1}, we get
\begin{align}
	\frac{1}{\mu}[a_1u_i^k-\sum_{j=1}^{k-1}(a_{k-j}-a_{k-j+1})u_i^j & -a_ku_i^0]+ \psi(u)^k_i = \delta^2_x u_i^k+f_i^k,\nonumber\\
	a_1u_i^k -\mu \delta^2_x u_i^k + \mu\psi(u)^k_i = \mu f_i^k & + \sum_{j=1}^{k-1}(a_{k-j} - a_{k-j+1})u_i^j - a_ku_i^0, \nonumber\\
	\implies a_1u_i^k -\mu \delta^2_x u_i^k & + \mu\psi(u)^k_i = H_i^k,\label{Eq2.10}
\end{align}
where 
\begin{equation}\label{Eq2.11}
	H_i^k =\mu f_i^k + \sum_{j=1}^{k-1}(a_{k-j} - a_{k-j+1})u_i^j - a_ku_i^0.
\end{equation}
By applying the Newton's iteration method to $ \psi(u) $ at $ (s+1)^{th} $ level, $s>0$, we have
\begin{align}\label{newton-formula}
	\psi(u^{(s+1)})= \psi(u^{(s)}) + (u^{(s+1)}-u^{(s)})\psi'(u^{(s)}).
\end{align}
Whence Eq. (\ref{Eq2.10}) at $ (s+1)^{th} $ level becomes
\begin{align*}
	a_1(u^{(s+1)})^k_i - \mu \delta^2_x (u^{(s+1)})^k_i + \mu\psi(u^{(s+1)})^k_i = H^k_i.
\end{align*}
Now, after applying the formula \eqref{newton-formula} we get the following equation
\begin{align}
    a_1(u^{(s+1)})^k_i - \mu \delta^2_x (u^{(s+1)})^k_i &+ \mu (u^{(s+1)})^k_i\psi'(u^{(s)})^k_i \nonumber\\&= \mu[(u^{(s)})^k_i\psi'(u^{(s)})^k_i - \psi(u^{(s)})^k_i] + H^k_i,\label{Eq2.12}
\end{align}
and here,
\begin{align}
	\delta^2_x (u^{(s+1)})^k_i&= \frac{1}{h^2}  [(u^{(s+1)})^k_{i+1} -2(u^{(s+1)})^k_i+(u^{(s+1)})^k_{i-1}],\label{Eq.16}\\
	\psi(u^{(s)})^k_i&= \frac{1}{6h}  [(u^{(s)})^k_{i+1} +(u^{(s)})^k_i+(u^{(s)})^k_{i-1}][(u^{(s)})^k_{i+1}-(u^{(s)})^k_{i-1}],\\
	\psi'(u^{(s)})^k_i &=\frac{1}{2h}  [(u^{(s)})^k_{i+1} -(u^{(s)})^k_{i-1}].\label{Eq.18}
\end{align}
So after putting the values \eqref{Eq.16}-\eqref{Eq.18} and doing some simplification, the vector representation of Eq. (\ref{Eq2.12}) becomes
\begin{align}\label{Eq2.24}
	\left[a_1 I - \frac{\mu}{h^2}E_1 + \frac{\mu}{2h}\text{diag}(E_2.(U^{(s)})^k)\right] & (U^{(s+1)})^k \nonumber\\
   = \frac{\mu}{2h}\text{diag}(E_2.(U^{(s)})^k) (U^{(s)})^k & - \frac{\mu}{6h}\text{diag}(E_3.(U^{(s)})^k) E_2(U^{(s)})^k + H^k,
\end{align}
where $ I $ is identity matrix of order $ (M-1 \times M-1) $;
\begin{align*}
    H^k=\mu G^k+ \sum_{j=1}^{k-1}(a_{k-j} - a_{k-j+1})U^j - a_kU^0;  
\end{align*}
\begin{align*}
	E_1 &= \begin{bmatrix}
		-2    & 1       & 0      &     &  \\
		1 	  & -2  	& 1     &      & \\
		& \ddots   & \ddots &\ddots & \\
		& 		& 1		 &-2		 & 1\\
        &         & 0      & 1    & -2
	\end{bmatrix}_{M-1 \times M-1},
	~~~~E_2 = \begin{bmatrix}
		0    & 1       & 0      &     &  \\
		-1 	  & 0  	& 1     &      & \\
		& \ddots   & \ddots &\ddots & \\
        & 		&- 1		 &0		 & 1\\
		&         & 0      & -1    & 0
	\end{bmatrix}_{M-1 \times M-1},
\end{align*}
\begin{align*}
	E_3 &= \begin{bmatrix}
	1    & 1       & 0      &     &  \\
	1 	  & 1  	& 1     &      & \\		
    & \ddots   & \ddots &\ddots & \\
	& 		& 1		 &1		 & 1\\
	&         & 0      & 1    & 1
\end{bmatrix}_{M-1 \times M-1};
~~~~U^k = \begin{bmatrix}
	   u_{1}^k \\
	   u_{2}^k \\
	   \cdots \\
	   u_{M-1}^k
	   \end{bmatrix},
~~~~
F^k = \begin{bmatrix}
		f_{1}^k    \\
		f_{2}^k 	 \\
		\cdots \\
		f_{M-1}^k
	\end{bmatrix}.
\end{align*}
The operator $ \text{diag}(.)$ transforms the vector into a diagonal matrix. 
We substitute $  B:=\left[a_1 I- \frac{\mu}{h^2}E_1 + \frac{\mu}{2h}E_2.(U^{(s)})^k\right]  $ in (\ref{Eq2.24}). The matrix $B$ is invertible, hence we obtain the final scheme as
	\begin{align}
		(U^{(s+1)})^k  = B^{-1} \left[  \frac{\mu}{2h} \right.  \text{diag} & (E_2.(U^{(s)})^k) (U^{(s)})^k  \nonumber \\
         & - \left. \frac{\mu}{6h}\text{diag}(E_3.(U^{(s)})^k) E_2(U^{(s)})^k + H^k \right].
	\end{align}

Now, we conduct some numerical experiments in support of the presented numerical scheme. We calculate the maximum absolute errors ($ \Xi $) and convergence order ($\Theta$) of the proposed scheme by the following formulas considering $ u^k_i $ as numerical solution and  $\mathcal{U}(x,t)$ as the exact solution
\begin{align}
	\Xi_j  := \max_{\substack{{0< i< N}\\ {0< k< M}}} & \vert \mathcal{U}(x_i,t_k)-u^k_i\vert, ~~~~(\text{at}~ j^{\text{th}} ~\text{iteration,})\\
	\Theta_{j+1} := & \ \log_2\left(\frac{\Xi_j}{\Xi_{j+1}}\right).
\end{align}
%
%
\begin{example}\label{Ex1} Consider the problem (\ref{Main.eq})--(\ref{eq2}) with 
$  u_0(x) = 0$ and\\ $f(x,t) = 2sin(\pi x)t^{(2-\alpha)}E_{\rho, 3-\alpha}^{-\gamma}(\omega t^{\rho})+\frac{\pi}{2} t^4 \sin(2\pi x)+ \pi^2 t^2 \sin(\pi x) $ for $(x,t) \in (0,1) \times (0,1)$.
\end{example}
The exact solution of (\ref{Main.eq})--(\ref{eq2}) with the above conditions is $ \ \mathcal{U}(x,t)=t^2 \sin (\pi x) $. To evaluate the numerical solution of this example, we choose Maximum Step = 500 (see Appendix \ref{appA}). We have two tables to show the maximum error ($ \Xi $) and convergence order($ \Theta $) of the scheme. Table  \ref{tab1} is in space direction where iterative accuracy  ($ ItAcc $) is E-05, and Table  \ref{tab2} is in temporal direction with  $ ItAcc $ = E-08. In both tables, we found that the order of convergence is nearly two as the convergence order of the presented numerical scheme is  $ (\tau^{2-\alpha}+h^2) $.
    
\begin{table}[htbp]
	\caption{The error $ \Xi $, order $ \Theta $, computing time, and number of iterations for Example \ref{Ex1} with $N=2^{14}$,  ItAcc = 1E-05,  MaxStep = 500.}\label{tab1}
	\begin{tabular}{cccccc}
		\hline
		$\alpha$                & $M$   & $ \Xi $         & $ \Theta $      & $ \sim $Time & \begin{tabular}[c]{@{}c@{}}No. of\\  Iterations\end{tabular} \\ \hline
		\multirow{4}{*}{0.2} & $2^{6}$ & 2.02791E-04 &         & 123  & 60168 \\
		& $2^{7}$ & 5.25900E-05 & 1.94713 & 942  & 60168 \\
		& $2^{8}$ & 1.35124E-05 & 1.96051 & 875  & 60167 \\
		& $2^{9}$ & 3.75316E-06 & 1.84811 & 2422 & 60167 \\ \hline
		\multirow{4}{*}{0.4} & $2^{6}$ & 1.19380E-04 &         & 1467 & 60168 \\
		& $2^{7}$ & 5.21271E-05 & 1.19546 & 1417 & 60168 \\
		& $2^{8}$ & 1.31737E-05 & 1.98436 & 1696 & 60167 \\
		& $2^{9}$ & 3.43717E-06 & 1.93837 & 2327 & 60167 \\\hline
		\multirow{4}{*}{0.6} & $2^{6}$ & 2.04683E-04 &         & 299  & 31426 \\
		& $2^{7}$ & 5.12785E-05 & 1.99696 & 306  & 31426 \\
		& $2^{8}$ & 1.29318E-05 & 1.98742 & 438  & 31426 \\
		& $2^{9}$ & 3.34602E-06 & 1.95041 & 5661 & 31426 \\ \hline
		\multirow{4}{*}{0.8} & $2^{6}$ & 2.00003E-04 &         & 199  & 31426 \\
		& $2^{7}$ & 5.04469E-05 & 1.98718 & 224  & 31426 \\
		& $2^{8}$ & 1.30657E-05 & 1.94898 & 356  & 31426 \\
		& $2^{9}$ & 3.72789E-06 & 1.80936 & 1153 & 31426 \\ \hline
	\end{tabular}
\end{table}	
	
	\begin{table}[htbp]
			\caption{ The error $ \Xi $, order $ \Theta $, computing time, and number of iterations for Example \ref{Ex1} with $M=2^{12},$ $ItAcc =1E-08$, $MaxStep = 500$.}
			\label{tab2}
		\begin{tabular}{cccccc}
			\toprule
			$\alpha$                & $N$   & $ \Xi $         & $ \Theta $      & $ \sim $Time & \begin{tabular}[c]{@{}c@{}}No. of\\  Iterations\end{tabular} \\ \midrule
			\multirow{4}{*}{0.2} & $2^3$ & 3.86872E-04 &         & 208   & 51  \\
			& $2^4$ & 1.19012E-04 & 1.70075 & 377   & 98  \\
			& $2^5$ & 3.61465E-05 & 1.71917 & 665   & 182 \\
			& $2^6$ & 1.08839E-05 & 1.73165 & 50906 & 346 \\ \midrule
			\multirow{4}{*}{0.4} & $2^3$ & 1.28180E-03 &         & 212   & 51  \\
			& $2^4$ & 4.35384E-04 & 1.55781 & 384   & 94  \\
			& $2^5$ & 1.46489E-04 & 1.57149 & 760   & 179 \\
			& $2^6$ & 4.89727E-05 & 1.58074 & 1205  & 336 \\ \midrule
			\multirow{4}{*}{0.6} & $2^3$ & 3.16257E-03 &         & 147   & 49  \\
			& $2^4$ & 1.20729E-03 & 1.38932 & 297   & 93  \\
			& $2^5$ & 4.58333E-04 & 1.39731 & 562   & 171 \\
			& $2^6$ & 1.73444E-04 & 1.40192 & 955   & 318 \\ \midrule
			\multirow{4}{*}{0.8} & $2^3$ & 7.00580E-03 &         & 152   & 49  \\
			& $2^4$ & 3.03864E-03 & 1.20513 & 273   & 88  \\
			& $2^5$ & 1.31466E-03 & 1.20874 & 520   & 162 \\
			& $2^6$ & 5.68242E-04 & 1.21011 & 1047  & 298 \\ \botrule
		\end{tabular}
	\end{table}
	
	\begin{example}\label{Ex2} Consider Eq.(\ref{Main.eq})-(\ref{eq2}) with $ 
		u_0(x) = 0$. Let $ L=1=T=1, ~~f(x,t) = 120 x(x-1)t^{5-\alpha} E_{\rho,6-\alpha}^{-\gamma}(\omega t^{\rho}) + x(x-1)(2x-1)t^{10} -2t^5 $.
		
	\end{example}
	The exact solution of this example is $\ \mathcal{U}(x,t)=t^5 x(x-1) $. Here, we choose  iterative accuracy  $ ItAcc $ = E-08 for time direction (Table \ref{tab3}) and   $ ItAcc $ = E-06 for space direction (Table \ref{tab4}), and Maximum Step = 500  (see Appendix \ref{appA}). Again, the obtained order of convergence of the scheme is approximately two. 
	
\begin{table}[htbp]
\centering
	\caption{The error $ \Xi $, order $ \Theta $, computing time, and number of iterations for Example \ref{Ex2} with $M=2^{11}$, $MaxStep = 500 $, $ItAcc =1E-08$.}\label{tab3}
	\begin{tabular}{cccccc}
		\toprule
		$\alpha$                & $N$   & $ \Xi $         & $ \Theta $      & $ \sim $Time & \begin{tabular}[c]{@{}c@{}}No. of\\  Iterations\end{tabular} \\ \midrule
		\multirow{4}{*}{0.2} & $2^3$ & 6.63368E-04 &         & 10  & 29  \\
		& $2^4$ & 2.23733E-04 & 1.56803 & 28  & 57  \\
		& $2^5$ & 7.21425E-05 & 1.63286 & 55  & 107 \\
		& $2^6$ & 2.26123E-05 & 1.67374 & 106 & 205 \\ \midrule
		\multirow{4}{*}{0.4} & $2^3$ & 2.28046E-03 &         & 10  & 29  \\
		& $2^4$ & 8.46655E-04 & 1.42948 & 24  & 56  \\
		& $2^5$ & 3.01189E-04 & 1.49110 & 56  & 105 \\
		& $2^6$ & 1.04408E-04 & 1.52843 & 103 & 202 \\ \midrule
		\multirow{4}{*}{0.6} & $2^3$ & 5.65044E-03 &         & 12  & 29  \\
		& $2^4$ & 2.33444E-03 & 1.27529 & 27  & 55  \\
		& $2^5$ & 9.28672E-04 & 1.32984 & 52  & 104 \\
		& $2^6$ & 3.61486E-04 & 1.36123 & 90  & 197 \\ \midrule
		\multirow{4}{*}{0.8} & $2^3$ & 1.22317E-02 &         & 12  & 29  \\
		& $2^4$ & 5.65531E-03 & 1.11294 & 27  & 55  \\
		& $2^5$ & 2.53554E-03 & 1.15731 & 52  & 103 \\
		& $2^6$ & 1.11798E-03 & 1.18139 & 78  & 192 \\ \toprule
	\end{tabular}
\end{table}

	\begin{table}[htbp]
			\caption{ The error $ \Xi $, order $ \Theta $, computing time, and number of iterations for Example \ref{Ex2} with $N=2^{15}$, $ItAcc =1e-6$,   $MaxStep  = 500$.}
			\label{tab4}
			\begin{tabular}{cccccc}
				\toprule
		$\alpha$                & $N$    & $ \Xi $         & $ \Theta $      & $ \sim $Time & \begin{tabular}[c]{@{}c@{}}No. of\\  Iterations\end{tabular} \\ \midrule
		\multirow{4}{*}{0.2} & $2^6$ & 4.98831E-06 &         & 911  & 52351 \\
		& $2^7$ & 1.25116E-06 & 1.99529 & 854  & 52351 \\
		& $2^8$ & 3.20518E-07 & 1.96479 & 1052 & 52351 \\
		& $2^9$ & 8.83646E-07 & 1.85886 & 5379 & 52351 \\ \midrule
		\multirow{4}{*}{0.4} & $2^6$ & 4.87048E-06 &         & 756  & 52351 \\
		& $2^7$ & 1.22365E-06 & 1.99287 & 915  & 52351 \\
		& $2^8$ & 3.11097E-07 & 1.97575 & 2032 & 52351 \\
		& $2^9$ & 8.31154E-08 & 1.90418 & 2525 & 52351 \\ \midrule
		\multirow{4}{*}{0.6} & $2^6$ & 4.71577E-06 &         & 734  & 52351 \\
		& $2^7$ & 1.20694E-06 & 1.96613 & 902  & 52351 \\
		& $2^8$ & 3.29704E-07 & 1.87211 & 3319 & 52351 \\
		& $2^9$ & 1.13032E-07 & 1.54443 & 2720 & 52351 \\ \midrule
		\multirow{4}{*}{0.8} & $2^6$ & 4.76969E-05 &         & 1632 & 52351 \\
		& $2^7$ & 1.21947E-06 & 1.96764 & 1565 & 52351 \\
		& $2^8$ & 3.26153E-07 & 1.90264 & 1775 & 52351 \\
		& $2^9$ & 8.46157E-08 & 1.94655 & 3503 & 52351 \\ \toprule
		\end{tabular}
	\end{table}


\section{Conclusions}\label{sec:4}
In conclusion, we have effectively solved the fractional Burgers equation using an implicit difference scheme that incorporates the Caputo-Prabhakar derivative. Our analysis establishes the stability and convergence of the proposed numerical scheme, demonstrating a convergence order of $ (\tau^{2-\alpha}+h^2) $. The method described in this paper achieves a better order of convergence in time direction, as compared to the paper \cite{yadav2020numerical}.  Moreover, the numerical experiments conducted support our theoretical findings, illustrating the robustness and effectiveness of our approach.
{While this work establishes a solid foundation for the numerical analysis of the fractional Burgers equation, we have not considered the initial singularity of the solution, which is commonly observed in many fractional PDEs \cite{jin2016two,stynes2022survey}. Addressing this aspect will be the focus of our future study. However, the approach discussed here still provides a strong basis for future advancements and remains applicable in cases where the initial singularity has minimal impact or can be addressed through alternative methods in subsequent analysis.}

\backmatter

\bmhead{Acknowledgements}
Authors thanks W. Qiu for the help when the work was at initial level. 

\section*{Declarations}

\begin{itemize}
\item Funding: DS is supported by UGC under SRF scheme. SY is supported by MUR (PRIN2022 research grant n. 202292JW3F). This work is partially supported by the ERCIM ‘Alain Bensoussan’ Fellowship programme.
\item Conflict of interest/Competing interests: The authors have no competing interests to declare.
\item Ethics approval and consent to participate: Not applicable.
\item Consent for publication: Not applicable.
\item Data availability: Not applicable.
\item Materials availability: Not applicable.
\item Code availability: Not applicable. 
\item Author contribution: All the authors contributed equally to this work.
\end{itemize}


\begin{appendices}

\section{Algorithm}\label{appA}

The algorithm to compute the numerical solution with the help of iteration methods is given in the article presented by Qiu et al. \cite{qiu2019implicit}. In this appendix, we provide the same algorithm for the ease of the reader.
For computational work, it is required to set maximum iteration step $ MaxStep $, and iterative accuracy $ ItAcc $. The algorithm used in the paper is as follows:
		
\centering{
    \begin{algorithm}[h]{\allowdisplaybreaks}
		\SetAlgoLined
		\KwIn{$U_0, MaxStep, N, ItAcc$}
		\KwOut{$U_n(1\leq n \leq N)$ }
		initialization: $U_1^{(0)}=U_0$ \;
		\For{n=1:N}{
			Calculate $H_i^k$ from the Eq. (\ref{Eq2.10})\;
			\While{(l)}{
				s=s+1\;
				Obtain $U_n^{(s)}$ by the iterative scheme ()\;
				\eIf{max[abs($U_n^{(s)}$-$U_n^{(s-1)}$)]$ < $ItAcc}{
					$U_n-U_n^{(s)}$\;
					break\;
				}{
				\If{$s\geq $ MaxStep}{
					Print: ``It is not convergent within the given number of steps"\;
							return\;
					}
				}
			}
			$U_{n+1}^{(0)}=U_n$ \;
		}
	return $U_n(1\leq n \leq N)$ \;
	\caption{To compute numerical solution}
    \end{algorithm}
}

\end{appendices}


\end{document}